\newtheorem{theorem}{Theorem} 
\newtheorem{lemma}{Lemma}
\newtheorem{claim}[lemma]{Claim}
\newtheorem{remark}[lemma]{Remark}
\newtheorem{proposition}[lemma]{Proposition} 
\newcommand*{\myproofname}{Proof}
\newenvironment{claimproof}[1][\myproofname]{\begin{proof}[#1]}{\end{proof}}
\newcommand*{\floorfrac}[2]{\mathopen{}\left\lfloor\frac{#1}{#2}\right\rfloor\mathclose{}}
\newcommand{\h}{\mathcal{H}}
\title{The maximum Wiener index of a uniform hypergraph}
\author{
Stijn Cambie\thanks{Extremal Combinatorics and Probability Group (ECOPRO), Institute for Basic Science (IBS), Daejeon, KR, supported by the Institute for Basic Science (IBS-R029-C4),
E-mail: {\tt stijn.cambie@hotmail.com, salianika@gmail.com}},
Ervin Gy\H{o}ri\thanks{Alfr\'ed R\'enyi Institute of Mathematics, Budapest, HU, supported by the National Research, Development and Innovation Office NKFIH, grants K132696, K135800 and SNN-135643,
E-mail: {\tt gyori.ervin@renyi.hu,ctompkins496@gmail.com}}, 
Nika Salia\footnotemark[1],
Casey Tompkins\footnotemark[2], 
James Tuite\thanks{Open University, Walton Hall, Milton Keynes, UK, supported by EPSRC grant EP/W522338/1 and London Mathematical Society grant ECF-2021-27.
E-mail: {\tt james.tuite@open.ac.uk}}
}
\begin{document}

\maketitle
\begin{abstract} 
The Wiener index of a (hyper)graph is calculated by summing up the distances between all pairs of vertices.
We determine the maximum possible Wiener index of a connected $n$-vertex $k$-uniform hypergraph and characterize for every~$n$ all hypergraphs attaining the maximum Wiener index.
\end{abstract}

\section{Introduction}

The Wiener index~\cite{Wie47}, also known as the total distance, is a well-studied parameter in graph theory. 
For a given graph $G$, the Wiener index of the graph is denoted by $W(G)$ where
\[
W(G)=\sum_{\{u,v\}\subseteq V(G)} d_{G}(u,v).
\]
The Wiener index of a graph with a fixed number of vertices is linearly related to the average distance of the graph, which is itself another important metric parameter in random graph theory. 
Additionally, it is related to parameters of random walks such as cover cost and Kemeny's constant~\cite{GW17,JSS22}.
Its generalization of Steiner-Wiener index~\cite{LMG16} is related to the average traveling salesman problem distance~\cite{C22}.

It is natural to investigate the extremal properties of the Wiener index. 
Among the most basic results on the total distance of a graph, there are folklore results that for every connected graph $G$ and a tree $T$ with $n$ vertices, we have 
\[
W(K_n) \le W(G) \le W(P_n) \mbox{~and~} W(S_n) \le W(T) \le W(P_n).
\]
Here $K_n$ is the $n$-vertex complete graph, $P_n$ is a path with $n$ vertices and $S_n$ is a star with $n$ vertices. 
Proofs can be found in~\cite{cambie2022thesis}.

While these statements and proofs might be straightforward and easy, the relationship between the Wiener index and other graph parameters or specific classes of graphs is still being explored.
For example among all graphs with a given diameter and order the minimum Wiener index is determined in~\cite{plesnik1984sum}.
In~\cite{cambie2020asymptotic} the maximum Wiener index is determined up to the asymptotics of the second-order term. 
The authors in~\cite{che2019upper,szekely2021wiener,ghosh2020maximum} have studied planar graphs and determined sharp bounds for the maximum of the Wiener index in Apollonian planar graphs and maximal planar graphs.

In this work, we study the Wiener index of $k$-uniform hypergraphs. 
Generalizing fundamental results in graph theory to hypergraphs gives a broader insight into the problem. 
A few examples of such extensions of classical results towards the hypergraph setting are given in~\cite{Mubayi06,GL12}. 
A $k$-uniform hypergraph $\h$ of order $n$ can be represented as a pair $(V,E)$ where $V$ is a vertex set with $n$ elements and $E \subseteq \binom{V}{k}$ is a family of $k$-sets, called hyperedges or just edges. 
Note that in the case of $k=2$, a $2$-uniform hypergraph is a graph.
In order to introduce a distance between the vertices of a hypergraph, at first we need to introduce a concept of paths for hypergraphs. 
In this work we follow the celebrated definition of Berge~\cite{berge1973graphs}: 
a Berge path of a hypergraph $\h$ is a sequence of distinct vertices and hyperedges $v_0 h_1 v_1 h_2 v_2 \ldots v_{d-1} h_d v_d$ of $\h$ where $v_{i-1},v_{i} \in h_i$ for every $1 \le i \le d$ for some $d$. 
A hypergraph is connected if there is a Berge path for every pair of vertices.
The length of a Berge path is the number of hyperedges in the Berge path.
Naturally the distance between two vertices $u,v$ is the length of the shortest Berge path containing $u$ and $v$ in the hypergraph $\h$, and it is denoted by $d_{\h}(u,v)$. 
Note that when the ground hypergraph is known from the context we write $d(u,v)$ instead of $d_{\h}(u,v)$.
The Wiener index of a connected hypergraph $\h$ is defined as $\sum_{\{u,v\} \subseteq V(\h)} d_{\h}(u,v).$

A connected hypergraph $\h$ is called a hyper-tree when it can be represented by a tree $T$ in the graph sense,  on the same vertex set, where every edge of $\h$ induces a connected subgraph of $T$, equivalent formulations are collected in~\cite[Chap.~5, Sec.~4]{Berge89}.
When restricting to linear $k$-uniform hyper-trees (connected hypergraphs with $n=m(k-1)+1$ vertices and $m$ edges), it was shown in~\cite{GZL17} that for every such hyper-tree $\h$, $W(S_n^k) \le W(\h)\le W(P_n^k)$, where $S_n^k$ and $P_n^k$ are the linear path and star, see Figure~\ref{fig:loosepath&star}.
Furthermore, the corresponding extremal hyper-trees are unique.

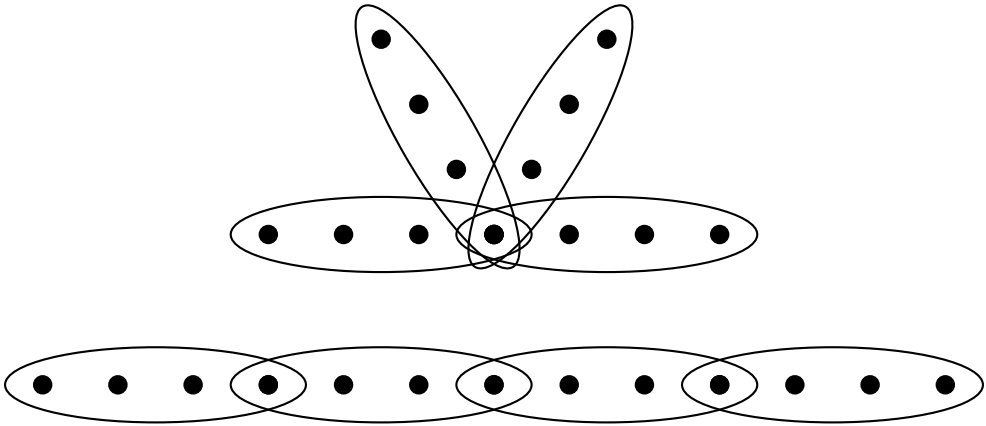
\begin{figure}[h]
 \centering
\begin{tikzpicture}[thick]
\foreach \y in {0,1,2,3}{
\foreach \x in {0, 1,2,3}{
\draw[fill] (60*\y:\x) circle (0.1151);
}
\draw[rotate=60*\y] (0:1.5) ellipse (2cm and 0.5cm);
}

\foreach \y in {0,1,2,3}{
\foreach \x in {0, 1,2,3}{
\draw[fill] (3*\x+\y-6,-2) circle (0.1151);
}
\draw (-4.5+3*\y,-2) ellipse (2cm and 0.5cm);
}

\end{tikzpicture}
 \caption{The loose star $S_{13}^4$ and path $P_{13}^4$}
 \label{fig:loosepath&star}
\end{figure}

Considering arbitrary connected $k$-uniform hypergraphs, or hyper-trees, the extremal hypergraphs for maximizing/minimizing the Wiener index are not always unique. Determining the lower bound for the Wiener index of $k$-uniform hypergraphs is trivial, as all distances between the distinct vertices are at least $1$.

\begin{proposition}\label{prop:lowerbound}
 Any connected $k$-uniform hypergraph $\h$ satisfies
 $\binom{n}{2}=W(K_n^k) \le W(\h).$
\end{proposition}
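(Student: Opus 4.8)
The plan is to use the crude but exact observation that the Wiener index is a sum of exactly $\binom{n}{2}$ terms, one for each unordered pair of distinct vertices, and that each such term is the distance between two distinct vertices, hence at least $1$.

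First I would recall that since $\h$ is connected, $d_{\h}(u,v)$ is a well-defined positive integer for every pair $\{u,v\} \subseteq V(\h)$: a Berge path joining two distinct vertices $u \ne v$ must contain at least one hyperedge, so $d_{\h}(u,v) \ge 1$. Summing this inequality over all $\binom{n}{2}$ pairs immediately yields
\[
W(\h) = \sum_{\{u,v\}\subseteq V(\h)} d_{\h}(u,v) \ge \sum_{\{u,v\}\subseteq V(\h)} 1 = \binom{n}{2}.
\]

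Then I would check that this bound is attained by $K_n^k$ (for $n \ge k$, so that the hypergraph actually has edges and is connected). Any two distinct vertices $u,v$ can be extended to a $k$-set, which is a hyperedge of $K_n^k$; this single hyperedge together with $u$ and $v$ forms a Berge path of length $1$, so $d_{K_n^k}(u,v) = 1$ for every pair. Hence $W(K_n^k) = \binom{n}{2}$, matching the lower bound.

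There is essentially no obstacle here; the only point needing the slightest care is confirming that equality is genuinely achieved, i.e. that $K_n^k$ is a legitimate connected $k$-uniform hypergraph, which holds as soon as $n \ge k$.
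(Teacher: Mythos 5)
Your proof is correct and matches the paper's approach exactly: the paper dismisses this bound as trivial precisely because every distance between distinct vertices in a connected hypergraph is at least $1$, so summing over all $\binom{n}{2}$ pairs gives the result. Your additional verification that $K_n^k$ attains equality (every pair of distinct vertices lies in a common hyperedge when $n \ge k$) is a reasonable bit of extra care that the paper leaves implicit.
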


We remark that, in huge contrast with the graph case, equality can also be attained by very sparse hypergraphs.

\begin{remark}
Equality in Proposition~\ref{prop:lowerbound} occurs if and only if for every pair of vertices there is a hyperedge incident to both. This is the case when the hypergraph corresponds with a projective plane. 
This is an example of a sparse hypergraph minimizing the Wiener index. Also hyper-trees can attain equality when $k \ge 3$, e.g. the hyper-tree with underlying tree $S_n$ and every subtree of order $k$ being a hyperedge.
\end{remark}

On the other hand, maximizing the Wiener index for $k$-uniform hypergraphs is a relatively complex problem.
In this note, we determine the maximum possible Wiener index of a connected $n$-vertex $k$-uniform hypergraph and characterize all hypergraphs attaining the maximum.
First, we define the class of extremal hypergraphs.
For integers $a$ and $b$ with $a\leq b$, let $[a, b]$ be the set $\{a,a+1,\ldots,b\}$ and $[a]$ be the set $[1, a]$.
For given integers $n$ and $k$ with $0\le k<n$, let $s$ and $r$ be integers such that $n=ks+r$ and $0\leq r<k$.
For every $r$ such that $r\neq 0$, let $P_{n}^{k}=(V,E)$ be the following tight-path. 
Let the vertex set of $P_n^{k}$ be $[n]$ and the edge set be
$ E=\left \{[(i-1)k+1, ik] \colon 1 \le i \le s \right\} \cup \left\{ [r+(i-1)k+1, ik+r] \colon 1 \le i \le s\right \}.$ 
For $r=0$, and every integer $x$, such that $0<x<k$, we define a $k$-uniform hypergraph $P_{n,x}^{k}=(V,E)$ where $V=[n]$ and
$E=\{ [(i-1)k+1, ik] \colon 1 \le i \le s\} \cup \{ [(i-1)k+1+x, ik+x] \colon 1 \le i \le s-1\}.$ See Figure~\ref{fig:densepath} for examples of both $P_n^{k}$ and $P_{n,x}^{k}$.
Our main result can now be stated as follows.

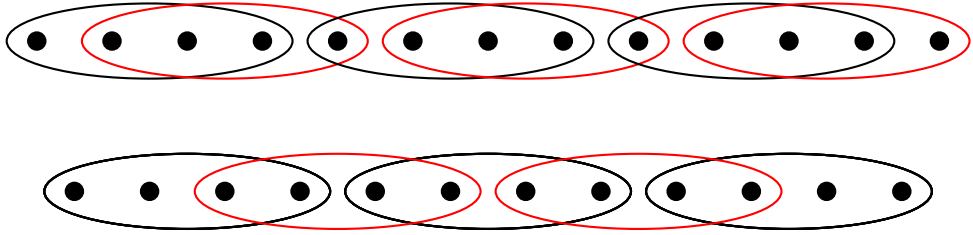
\begin{figure}[h]
 \centering
\begin{tikzpicture}[thick]

\foreach \y in {0,1,2}{
\foreach \x in {0, 1,2,3}{
\draw[fill] (\x+4*\y,-2) circle (0.1151);
}
\draw (1.5+4*\y,-2) ellipse (1.9cm and 0.5cm);
\draw[red] (2.5+4*\y,-2) ellipse (1.9cm and 0.5cm);
}
\draw[fill] (12,-2) circle (0.1151);

\foreach \y in {0,1,2}{
\foreach \x in {0, 1,2,3}{
\draw[fill] (\x+4*\y+0.5,-4) circle (0.1151);
\draw (2+4*\y,-4) ellipse (1.9cm and 0.5cm);
}
}
\foreach \y in {0,1}{
\draw[red] (4+4*\y,-4) ellipse (1.9cm and 0.5cm);
}

\end{tikzpicture}
 \caption{The tight paths $P_{13}^4$ and $P_{12,2}^4$ for $k=4$}
 \label{fig:densepath}
\end{figure}

\begin{theorem}\label{thr:main}
 Let $\h$ be a connected $k$-uniform hypergraph of order $n\geq k$.
 If $k \nmid n,$ then $W(\h) \le W(P_n^{k})$ with equality if and only if $\h=P_n^{k}.$
 If $k \mid n$, then $W(\h) \le W(P_{n,1}^{k})$ with equality if and only if $\h=P_{n,x}^{k}$ for some $0<x<k.$
\end{theorem}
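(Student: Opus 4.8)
\bigskip

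The plan is to bound the Wiener index by first controlling the diameter of $\h$ and then, among hypergraphs of a given diameter, identifying the distance-maximizing structure. First I would observe that if $\h$ is a connected $k$-uniform hypergraph on $n$ vertices, then a shortest Berge path witnessing the diameter $D$ uses $D$ distinct hyperedges and at least $D(k-1)+1$ distinct vertices (consecutive edges on a Berge path can share at most $k-1$ vertices, and a shortest path cannot revisit a vertex), so $D \le \lfloor (n-1)/(k-1) \rfloor$. However, this bound is not tight for the Wiener index extremal examples when $k \ge 3$: in $P_n^k$ and $P_{n,x}^k$ consecutive edges overlap in exactly one vertex only at the ``seams,'' and most overlaps are larger, so the diameter of the extremal hypergraph is roughly $2n/k$, not $n/k$. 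The right first step is therefore to set up a \emph{layered/BFS decomposition} from an arbitrary vertex $v$: writing $N_i(v)$ for the set of vertices at distance exactly $i$ from $v$, I would show that each nonempty layer $N_i(v)$ with $1 \le i \le D-1$ together with its neighbour layers is ``covered'' in a way that forces $\sum_i \lvert N_i(v)\rvert = n$ while each internal layer cannot be too small; quantifying this gives an upper bound on $\sum_{u} d(v,u)$ for each fixed $v$, and summing over $v$ bounds $W(\h)$.

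\bigskip

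The cleaner route, which I would actually pursue, is an \emph{edge-counting / averaging argument} on the distance sum directly. For a fixed vertex $v$, consider the quantity $\sigma(v) = \sum_{u \in V} d_\h(v,u)$. I would prove the key inequality that for every $t \ge 1$, the number of vertices at distance at most $t$ from $v$ is at least $\min\{n, \; 1 + (\text{something growing linearly in } t)\}$; more precisely, since $\h$ is $k$-uniform and connected, the ball of radius $t$ contains at least $\min\{n,\, k + (t-1)\cdot 1 + \ldots\}$ — here the honest statement is that passing from radius $t$ to radius $t+1$ adds at least one new vertex as long as the ball is not all of $V$, but to beat the naive $P_n$-type bound one needs that over any \emph{two} consecutive radius increments at least $k-1$ new vertices appear unless we are near a seam. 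This is the combinatorial heart: I would formalize it as a statement about the ``interface'' between a ball $B_t(v)$ and its complement — any hyperedge meeting both contributes, and counting incidences of such edges shows the ball grows by an average of at least $(k-1)/2$ vertices per step, with the deficit cases (growth by only $1$) being exactly the seam configuration of the tight path. Summing $\lvert B_t(v)\rvert$ over $t$ then yields $\sigma(v) \le \sigma_{P}(v_{\max})$ where $v_{\max}$ is an endpoint of the extremal tight path, and equality analysis pins down the local structure near $v$.

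\bigskip

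With the per-vertex bound in hand, I would sum over all $v$ to get $2W(\h) = \sum_v \sigma(v) \le \sum_v \sigma_{P}(\cdot)$ and check that the right-hand side equals $2W(P_n^k)$ (resp.\ $2W(P_{n,1}^k)$) by a direct computation on the tight path — this is the routine calculation I would relegate to a lemma, computing $W(P_n^k)$ and $W(P_{n,x}^k)$ in closed form in terms of $n,k,s,r$ and verifying $W(P_{n,x}^k)$ is independent of $x$ when $k \mid n$ (which explains the non-uniqueness in that case). The final and most delicate part is the \emph{characterization of equality}: I would argue that equality in the per-vertex bound for \emph{every} $v$ simultaneously forces every ball-interface to have the minimum possible size, which propagates to show that the hypergraph is ``path-like'' — there is a linear order on $V$ such that every hyperedge is an interval and the overlap pattern of consecutive edges is exactly as in $P_n^k$ or $P_{n,x}^k$. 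The main obstacle I anticipate is precisely this rigidity step: proving that local minimality of every distance ball is not merely \emph{necessary} but pins the global structure down to the listed families, ruling out exotic gluings (e.g.\ branched or cyclic configurations that happen to have the same ball sizes from every vertex). Handling the boundary layers correctly — the first and last $\lceil (k-r)/\gcd \rceil$ or so hyperedges, where the two ``strands'' of the tight path are forced — and showing no other endpoint behavior achieves equality, is where the case analysis on $r$ versus $r=0$ will be needed, mirroring the split in the theorem statement.
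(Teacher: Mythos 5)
Your local ingredient is right, but your global strategy has a genuine gap. The BFS-layer counting you describe --- that consecutive layers $N_i(v), N_{i+1}(v)$ together must contain a whole hyperedge, so the ball grows by at least $k$ (not $k-1$) vertices per two steps away from the seams --- is exactly the combinatorial heart of the paper's key Claim ($n_1\ge k-\ell$ and $n_i+n_{i+1}\ge k$, plus an eccentricity bound of roughly $2s'$). The problem is the step $2W(\h)=\sum_v \sigma(v)\le \sum_v \sigma_P(\cdot)$. The layer argument bounds each $\sigma(v)$ by the distance sum of an \emph{endpoint} of the tight path, i.e.\ by the maximum row sum of the distance matrix of $P_n^k$. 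Summing that bound over all $n$ vertices gives roughly $n\cdot\sigma_P(v_{\max})$, which is of order $n^3/k$ and far exceeds $2W(P_n^k)\approx \tfrac{2}{3}n^3/k^2$; already for $k=2$ this yields $n\binom{n}{2}$ versus the correct $2\binom{n+1}{3}$. To make the averaging route work you would need $\sigma_\h(v)\le\sigma_P(\pi(v))$ for some \emph{bijection} $\pi$ onto the path's vertices --- a majorization statement over the whole profile of distance sums --- and nothing in your sketch delivers that; most vertices of the path have much smaller $\sigma_P$ than the endpoint, and there is no reason a single uniform layer bound distinguishes which vertices of $\h$ deserve the smaller budgets.

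The paper avoids this entirely by applying the per-vertex bound only once per induction step. It first passes to an edge-minimal $\h$ and proves a lemma that some hyperedge $h_1$ can be deleted leaving one connected component on $n-\ell$ vertices plus $\ell<k$ isolated vertices. Then $W(\h)\le W(\h'_{V'})+\binom{\ell}{2}+\ell\sum_{u\in V'}d(u,v)$ for a worst isolated vertex $v$, the first term is handled by induction on $n$, and only the single sum $\sum_{u}d(u,v)$ needs your layer argument. The final comparison is a closed-form algebraic identity (the difference of the two sides is $(k-\ell-r)^2$ or $\ell r$), and the equality characterization falls out of when those quantities vanish together with the inductive uniqueness statement --- which sidesteps the global rigidity analysis you correctly flag as the hardest obstacle in your plan. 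I would recommend restructuring around such an induction: find a pendant-like edge, peel off fewer than $k$ vertices, and bound one distance sum, rather than trying to control $\sigma(v)$ for every $v$ simultaneously.
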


\section{Proof of Theorem~\ref{thr:main}}\label{sec:proofmain}
Let $\h=(V,E)$ be an $n$-vertex $k$-uniform connected hypergraph with maximum Wiener index.
Then we may assume that for every edge $h$ of $\h$, the hypergraph $\h'=(V,E \backslash h)$\footnote{We abuse notation and write $E \backslash h$ instead of $E \backslash \{h\}$.} is not connected, otherwise we would remove the edge and the Wiener index would not decrease.
Proving Theorem~\ref{thr:main} for edge-minimal hypergraphs is sufficient since adding an edge to one of the extremal hypergraphs from Theorem~\ref{thr:main} decreases the Wiener~index.
The following lemma shows that there is a hyperedge $h$ in $E$, such that hypergraph $\h'=(V,E \backslash h)$ an contains at most one connected component of size greater than one.

\begin{lemma}\label{lem:findgoodedge}
 Let $\h=(V,E)$ be a $k$-uniform hypergraph such that the deletion of any edge disconnects the hypergraph.
 Then there is an edge $h$ of $\h$ such that the hypergraph $\h'=(V,E \backslash h)$ contains at most one connected component of size greater than one.
\end{lemma}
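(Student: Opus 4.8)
The plan is to translate the statement into a question about cut vertices of the bipartite incidence graph of \(\h\) and then read it off from the block-cut tree. First I would pass to the bipartite incidence graph \(B\): it has vertex classes \(E\) and \(V\), and \(h\in E\) is adjacent to \(v\in V\) precisely when \(v\in h\). Since a Berge path of \(\h\) is exactly a path of \(B\) between two \(V\)-vertices, \(\h\) is connected iff \(B\) is, and deleting a hyperedge \(h\) from \(\h\) is the same as deleting the vertex \(h\) from \(B\). So the hypothesis says that every \(E\)-vertex of \(B\) is a cut vertex of \(B\). Moreover every \(E\)-vertex has \(B\)-degree \(k\ge 2\), so no component of \(B-h\) consists of a single \(E\)-vertex; hence the components of \(\h-h\) of size greater than one are exactly the components of \(B-h\) that contain an \(E\)-vertex. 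Thus it suffices to exhibit an \(E\)-vertex \(h\) such that \(B-h\) has at most one component meeting \(E\).

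The key structural observation is that every leaf of the block-cut tree \(\mathcal{T}\) of \(B\) is a pendant edge \(\{h,v\}\) with \(h\in E\), \(v\in V\), \(\deg_B(v)=1\), and \(h\) its unique cut vertex. Indeed, leaves of \(\mathcal{T}\) are blocks (a cut vertex lies in at least two blocks, hence has \(\mathcal{T}\)-degree at least two), and a leaf block \(\beta\) contains exactly one cut vertex of \(B\); if \(\beta\) had at least three vertices then, being bipartite and having no cut vertex of its own, it would contain at least two \(E\)-vertices (were its \(E\)-class a single vertex \(h\), that \(h\) would be a cut vertex of \(\beta\)), and by the previous paragraph all of these are cut vertices of \(B\), contradicting that \(\beta\) has only one. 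So \(\beta\) is a single edge, which by bipartiteness is \(\{h,v\}\) with \(h\in E\); since \(h\) must be \(\beta\)'s cut vertex, \(v\) lies in no other block, i.e.\ \(\deg_B(v)=1\).

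To finish, if \(\mathcal{T}\) has no internal block — equivalently \(B\) has a single cut vertex — then by the observation \(B\) is a star centred at an \(E\)-vertex, so \(\h\) is a single hyperedge with \(n=k\) and that edge works. Otherwise root \(\mathcal{T}\) at an internal block and, among all leaves of \(\mathcal{T}\), choose one, \(\beta_0=\{h_0,v_0\}\), of maximum depth; let \(h_0\) be its parent cut vertex. Maximality forces every child of \(h_0\) in \(\mathcal{T}\) to be a leaf block, hence of the form \(\{h_0,v_i\}\) with \(\deg_B(v_i)=1\). Deleting \(h_0\) from \(B\) splits it into one piece per block containing \(h_0\): each child block contributes the isolated vertex \(v_i\), while the parent block contributes a single connected piece (the rest of \(B\), corresponding to \(\mathcal{T}\) with the subtree below \(h_0\) deleted). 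Hence \(B-h_0\) has at most one component meeting \(E\), so \(h=h_0\) satisfies the lemma.

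The main obstacle is the structural observation in the second paragraph, which is the only place the hypothesis is used in an essential way; the clean route is the remark that a bipartite block on at least three vertices must have at least two vertices in each class and therefore at least two \(E\)-vertices, all of which would be cut vertices. After that the argument is the standard ``take a deepest leaf'' trick, the only remaining work being the routine description of how deleting a cut vertex decomposes \(B\) along \(\mathcal{T}\), together with the trivial single-hyperedge case.
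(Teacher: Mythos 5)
Your proof is correct, but it takes a genuinely different route from the paper. The paper's argument is a three‑line extremal one entirely inside the hypergraph: choose the hyperedge $h$ whose deletion leaves a largest connected component of maximum size; if $\h - h$ still had two components $\h_1,\h_2$ each containing a hyperedge (with $\h_1$ largest), then deleting an edge $h_2$ of $\h_2$ instead would leave a component containing $\h_1$ together with all of $h$ (which meets $\h_1$ by connectivity of $\h$), strictly beating the maximum --- a contradiction. You instead pass to the bipartite incidence graph, observe that the hypothesis says exactly that every edge‑vertex is a cut vertex, prove that every leaf block of the block--cut tree is a pendant edge hanging from an edge‑vertex, and then run the standard deepest‑leaf argument. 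Both proofs are sound (and both tacitly assume $k\ge 2$ and $E\ne\emptyset$, which hold in context). The paper's argument is shorter and needs no auxiliary structure; yours costs the block--cut‑tree setup but yields a sharper structural conclusion for free, namely that the deleted hyperedge $h_0$ leaves one large component plus isolated vertices, each of which lies in no hyperedge other than $h_0$ --- essentially the form in which the lemma is actually applied in Section~\ref{sec:proofmain} (there it is rederived from edge‑minimality). Either route is acceptable; if brevity matters, the extremal choice of $h$ is the more economical one.
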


\begin{proof}
Let $h$ be a hyperedge of $\h$ such that the size of the largest connected component of $\h'=(V,E \backslash h)$ is the maximum.
Then either we are done or $\h'=(V,E \backslash h)$ contains two connected components $\h_1$ and $\h_2$ each containing a hyperedge such that $\h_1$ is a component with the maximum size. 
Let $h_2$ be a hyperedge of $\h_2$, the hypergraph $(V,E \backslash h_2)$ contains a component of size larger than $H_1$ since it contains a component containing $\h_1$ with all vertices of $h$ as well, a contradiction.
\end{proof}

By Lemma~\ref{lem:findgoodedge} and the edge-minimality of $\h$, there is an edge $h_1$ of $\h$ such that $\h'=(V,E \backslash h_1)$ consists of a connected component of order $n-\ell$ on a vertex set $V'\subset V$ and $\ell$ isolated vertices for some $1\leq \ell <k$.
Let $\h'_{V'}$ be the largest connected component of $\h'$.

Now we proceed by induction, noting that the theorem trivially holds for $n<2k$. 
Assume Theorem~\ref{thr:main} holds for every order strictly smaller than $n$. 
Let $v$ be an isolated vertex of $\h'$ such that $\sum_{u \in V'} d_{\h'}(u,v)$ is the maximum possible.
Then we have
\begin{equation}\label{eq:inductionstep}
W(\h) \le W(\h'_{V'}) + \binom{\ell}{2} + \ell \sum_{u \in V'} d(u,v). \end{equation}
By induction, we have an upper bound on $W(\h'_{V'}).$
For this, we compute the Wiener index of the paths $P_n^k$ and $P_{n,x}^k$. 
When $n=ks+r,$ for some integers $s$ and $r$ such that $0\leq r<k$, the Wiener index of the path equals
\begin{align*}
 f(s,k,r)&=\frac{k^2s^3}3 + rks^2 +r^2s+ \frac{(k^2 - 3k)s}6 + \binom{r}{2},
\end{align*}
Note that the Wiener index of $P_{n,x}^k$ is independent of the choice of $x$.

We also need to find an upper bound for $\sum_{u \in V'} d_{\h}(u,v)$. To this end, we define the following two functions:
\begin{align*}
 g_1(\ell,k,s,r)&=k s^2 + \ell s+ r(2s+1),\\
 g_2(\ell,k,s,r)&=k s^2 + \ell s + 2rs.\\
\end{align*}

\begin{claim}
 Let $n-\ell=ks'+r'$ for integers $s'$ and $r'$ for $0\leq r'<k$, then we have 
 \[
 \sum_{u \in V'} d(u,v)\le \begin{cases}
 g_1(\ell,k,s',r') \mbox{ if } k-\ell \le r'\\
 g_2(\ell,k,s',r') \mbox{ if } k-\ell > r'\\
 \end{cases}
 \]
\end{claim}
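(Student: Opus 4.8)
The plan is to bound $\sum_{u\in V'} d(u,v)$ by first understanding which vertex $v$ maximizes this sum, and then reducing to the extremal structure of $\h'_{V'}$ guaranteed by the inductive hypothesis. The key observation is that $v$ is an isolated vertex of $\h'$ but lies in the deleted edge $h_1$, so $v$ is joined to $V'$ through $h_1$; consequently, for every $u\in V'$ we have $d_\h(u,v)=1+\min_{w\in h_1\cap V'} d_{\h'_{V'}}(w,u)$, roughly speaking (with the convention that $d=1$ for the $k-1-\ell$ vertices of $h_1\cap V'$ themselves). Since $|h_1\cap V'|=k-\ell$, the quantity $\sum_{u\in V'} d(u,v)$ is essentially $|V'|$ plus the total distance in $\h'_{V'}$ from a set $S$ of $k-\ell$ vertices, measured via the minimum to $S$. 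To make this as large as possible the hypergraph $\h'_{V'}$ should be a tight path (by induction and edge-minimality it is $P_{n-\ell}^k$ or $P_{n-\ell,x}^k$), and $S$ should be the set of $k-\ell$ vertices at one extreme end of this path — more precisely, the vertices farthest from the opposite end, so that the "min-to-$S$" distances are as large as possible.

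First I would set up this reduction carefully: invoke the inductive hypothesis to replace $\h'_{V'}$ by the appropriate extremal tight path on $n-\ell=ks'+r'$ vertices, and observe that among all choices of an $(k-\ell)$-subset $S$ of its vertex set, the sum $\sum_{u} \big(1+\min_{w\in S} d(w,u)\big)$ is maximized when $S$ consists of a "terminal block" at one end. Then I would compute this maximum explicitly for the tight path, splitting into the two cases according to how the terminal block $S$ of size $k-\ell$ sits relative to the last full edge-block of size $k$: if $k-\ell\le r'$ the block $S$ fits inside the final "overhang" of length $r'$, giving one arithmetic series and the formula $g_1$; if $k-\ell>r'$ the block $S$ extends into the previous full edge, shifting the series and giving $g_2$. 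The distances in a tight path are $\lceil \text{(index gap)}/\text{(something)}\rceil$-type quantities, so both computations are summations of near-arithmetic progressions with a correction term $r'$ or $2r'$ depending on the boundary alignment — this is where the two formulas $g_1,g_2$ come from, and the constants $\ell s' + k s'^2$ are the contribution of the bulk of the path.

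The main obstacle I expect is twofold. First, justifying rigorously that the extremal $v$ (equivalently the set $S=h_1\cap V'$) really does correspond to a terminal block of the path and not some scattered set: one needs a swapping/shifting argument showing that moving a vertex of $S$ toward the end never decreases $\sum_u \min_{w\in S} d(w,u)$, which requires knowing the "distance profile" of the tight path from each vertex — this is a genuine but routine monotonicity lemma. Second, the honest bookkeeping of the ceiling functions in the definition of tight-path distance near the two ends, where the overhang $r'$ (resp. the shift $x$ in the $k\mid(n-\ell)$ case) interacts with the block $S$; getting the boundary terms exactly right is what separates $g_1$ from $g_2$ and is the most error-prone part. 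Once the reduction is in place, everything else is a finite computation that I would only sketch, not grind through.
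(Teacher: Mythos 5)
Your central reduction --- ``invoke the inductive hypothesis to replace $\h'_{V'}$ by the appropriate extremal tight path'' --- is not justified, and this is a genuine gap rather than a bookkeeping issue. The induction hypothesis (Theorem~\ref{thr:main} for orders below $n$) only says that $P_{n-\ell}^k$ maximizes the \emph{Wiener index} among connected $k$-uniform hypergraphs on $n-\ell$ vertices; it says nothing about which hypergraph maximizes $\sum_{u\in V'}\bigl(1+\min_{w\in S}d(w,u)\bigr)$ for a $(k-\ell)$-set $S$, which is a different extremal quantity. Moreover, the Claim has to hold for the \emph{actual} component $\h'_{V'}$ of the edge-minimal, Wiener-maximal hypergraph $\h$, and nothing forces that component to be a tight path: a priori $\h$ could trade a smaller $W(\h'_{V'})$ for a larger $\sum_{u}d(u,v)$, and the whole point of the final computation in Section~\ref{sec:proofmain} is to show that the \emph{sum} of the two bounds is dominated by $f$. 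So you would need to prove, as a separate lemma, that your ``terminal block of a tight path'' configuration maximizes the min-distance sum over \emph{all} connected $(n-\ell)$-vertex hypergraphs --- and the natural proof of that lemma is essentially the argument you are trying to avoid.

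The paper's proof sidesteps all of this with a layer-counting argument that applies to an arbitrary connected component: with $n_i$ the number of vertices of $V'$ at distance $i$ from $v$, one has $n_1\ge k-\ell$ (from the deleted edge $h_1$) and $n_i+n_{i+1}\ge k$ for $1\le i\le d-1$ (an edge joining layer $i$ to layer $i+1$ lies entirely within those two layers); these inequalities bound the eccentricity $d$ by $2s'$ or $2s'+1$ according to whether $k-\ell>r'$ or not, and an Abel summation $\sum_i i\,n_i = d\sum_i n_i-\sum_j\sum_{i\le j}n_i$ together with lower bounds on the partial sums then yields $g_1$ and $g_2$ directly. Your intuition about the extremal configuration (a terminal block of size $k-\ell$ on a tight path, with the case split governed by whether that block fits inside the overhang $r'$) is correct and matches where equality holds, but as a proof strategy it assumes structure that the induction hypothesis does not provide.
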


\begin{claimproof}
 Let $d$ be the eccentricity of $v$, i.e. $d=\max_{u \in V'} d_{\h}(u,v).$
 For $d\ge i \ge 1$, let $n_i$ be the number of vertices in $\h'$ which are at distance $i$ from $v$.
 Let $n_i=0$ for $i>d.$
 Now by definition, we have $\sum_{u \in V'} d(u,v)= \sum_{i=1}^d i n_i$ and $\sum_{i=1}^d n_i=(n-\ell)=ks'+r'$.
 Note that $n_1\ge k-\ell$ and $n_i+n_{i+1}\ge k$ for every $1\le i\le d-1,$ since for every such $i$ there is an edge whose vertices are all at distance $i$ and $i+1$ from $v.$
 
 First, we bound the eccentricity.
 Let $u$ be a vertex for which $d(u,v)=d$ and $h$ an edge containing the vertex $u$.
 When $0 \le r' < k-\ell,$ either $d\le 2s'$ or we have $n_1+\dots +n_{2s'-1} \ge (k-\ell)+(s'-1)k>k(s'-1)+r'$ and hence $h$ cannot be disjoint of these vertices at distance at most $2s'-1$ from $v$, thus we have $d\le 2s'.$ Now we conclude 
 \begin{align*}
 \sum_{i=1}^d i n_i
 &= 2s' \left( \sum_{i=1}^{2s'} n_i \right) - \sum_{j=1}^{2s'-1} \left( \sum_{i=1}^{j} n_i \right)\\
 &\le 2s'(n-\ell) - \left( s'(k-\ell) + \sum_{j=1}^{2s'-1} \floorfrac j2 k \right)\\
 &= 2s'(ks'+r') -\left( s'(k-\ell) + s'(s'-1)k \right)\\
 &= s'^2k+2s'r'+\ell s'.
 \end{align*}
 In the case that $k-\ell \le r' <k,$ we similarly have $n_1+\dots +n_{2s'} \ge ks' $ and again $h$ cannot be disjoint from at least $ks'$ vertices in $\h'$, thus $d\le 2s'+1.$
 The computation now analogously results into $\sum_{i=1}^d i n_i\le s'^2k+(2s'+1)r'+\ell s'.$
\end{claimproof}
Finally, it is sufficient to prove that the upper bound for the right-hand side of Inequality~\eqref{eq:inductionstep} is always bounded by the claimed upper bound.
For $n-\ell=ks+r$, we have
\begin{align*}
 f(s+1,k,r+\ell-k)&-\left( f(s,k,r)+\ell g_1(\ell,k,s,r)+\binom \ell 2\right)=(k-\ell-r)^2\geq 0 \: \: \mbox{ if } k-\ell \le r,\\
 f(s,k,r+\ell)&-\left( f(s,k,r)+\ell g_2(\ell,k,s,r)+\binom \ell 2\right)=\ell r \geq 0\: \: \:\quad \quad \quad \quad \mbox{ if } k-\ell > r,
\end{align*}
which also has been verified by Maple\footnote{See \url{https://github.com/StijnCambie/WienerHypergraph}}.

Equality holds if $k=\ell+r$ or $r=0$. 
If $k \mid n,$ equality occurs if and only if $k=\ell+r$ and for every fixed $0<\ell<k$ the only hypergraph attaining the maximum Wiener index is $P_{n,\ell}^{k}$.
If $k\nmid n,$ equality occurs if and only if $r=0$, since $0<\ell<k$ and by the induction hypothesis the unique connected $n$-vertex $k$-uniform hypergraph maximizing the Wiener index is $P_n^{k}$. 
\bibliographystyle{abbrv}
\bibliography{WH}

\end{document}